\newtheorem{thm}{Theorem}
\newtheorem{lem}[thm]{Lemma}
\newtheorem{prop}[thm]{Proposition}
\begin{document}

\title{Chromatic polynomials of complements of bipartite graphs}
\author{Adam Bohn}
\institute{Adam Bohn \at
              School of Mathematical Sciences \\
              Queen Mary, University of London \\
              Mile End Road \\
              London E1 4NS, U.K.\\
\email{a.bohn@qmul.ac.uk}
}

\maketitle

\begin{abstract}
Bicliques are complements of bipartite graphs; as such each consists of two cliques joined by a number of edges. In this paper we study algebraic aspects of the chromatic polynomials of these graphs.  We derive a formula for the chromatic polynomial of an arbitrary biclique, and use this to give certain conditions under which two of the graphs have chromatic polynomials with the same splitting field.  Finally, we use a subfamily of bicliques to prove the cubic case of the $\alpha +n$ conjecture, by showing that for any cubic integer $\alpha$, there is a natural number $n$ such that $\alpha +n$ is a chromatic root.\\
\end{abstract}

\section{Introduction}

If $q$ is a positive integer, then a \emph{proper $q$-colouring} of a graph $G$ is a function from the vertices of $G$ to a set of $q$ colours, with the property that adjacent vertices receive different colours.  The \emph{chromatic polynomial} $P_G(x)$ of $G$ is the unique monic polynomial which, when evaluated at $q$, gives the number of proper $q$-colourings of G.  A \emph{chromatic root} of $G$ is a zero of $P_G(x)$.  The chromatic polyomial has been the subject of much study; see \cite{read:introduction} for a comprehensive introduction.

This paper is a study of the chromatic polynomials of a family of simple graphs we shall call \textit{bicliques}.  This family is easily described: each member is simply the complement of a bipartite graph, consisting of two cliques joined by a number of edges.  When we need to be more specific, we shall refer to a biclique in which the two cliques are of size $j$ and $k$ as a \emph{$(j,k)$-biclique.}  By convention, $k$ will be greater than or equal to $j$, and we shall refer to the edges between the two cliques as \textit{bridging edges.}

The present study of bicliques was originally undertaken with the aim of extending the proof of Cameron's ``$\alpha + n$ conjecture,'' which suggests that for any algebraic integer $\alpha$ there is a positive integer $n$ such that $\alpha + n$ is a chromatic root.  Cameron and Morgan \cite{pjc11} proved the conjecture in the quadratic case, by showing that any quadratic integer is an integer shift of a chromatic root of a $(2,k)$-biclique, but no further progress has subsequently been made.

In section 2, we give a simple construction of the chromatic polynomial of an arbitrary biclique.  We then use this construction in section 3 to examine relations between bicliques having chromatic polynomials with the same splitting field.  In section 4 we justify the original motivation for studying bicliques, by applying them to prove the cubic case of the $\alpha + n$ conjecture, and finally we remark on the potential suitability of bicliques for proving the general conjecture.

\section{Chromatic polynomials of bicliques}\label{sec:poly}

A \emph{matching} of a graph is a set of edges of that graph, no two of which are incident to the same vertex.  When we refer to the size of a matching, we refer to the number of edges in that matching; an \emph{$i$-matching} is a matching of size $i$.  Let $m_G^i$ be the number of $i$-matchings of a graph $G$; then the \emph{matching numbers} of $G$ are the elements of the sequence $(m_G^0,m_G^1,m_G^2,\ldots)$.  We will write that two graphs are \emph{matching equivalent} if they have the same matching numbers.

Now, for some positive integers $j$ and $k$, let $G$ be a $(j,k)$-biclique, and let $\bar G$ be the complement of $G$ (obtained by replacing edges of $G$ with non-edges, and vice-versa).  Then $\bar G$ is a subgraph of the complete bipartite graph $K_{j,k}$.  We shall construct the chromatic polynomial of $G$ by considering matchings of $\bar G$.

Given some matching of $\bar G$, partition the vertices of $G$ such that two vertices are contained in the same part if and only if the corresponding vertices of $\bar G$ are joined by an element of the matching.  Then, by assigning a different colour to each part of this partition, we obtain a proper colouring of $G$.  Conversely, any proper colouring of $G$ corresponds to a partition induced by some matching of $\bar G$.  Thus we can compute the chromatic polynomial of $G$ by counting $x$-colourings of partitions induced by matchings of $\bar G$, as follows.

Let $(x)_k$ denote the falling factorial $x(x-1)\cdots(x-k+1)$.  If each part receives a different colour, then there are $(x)_{j+k-i}$ ways of assigning $x$ colours to a partition induced by an $i$-matching of $\bar G$ (as any such partition consists of $j+k-i$ parts).  Thus:
\begin{equation}\label{eqn:formula}
P_G(x)=\sum_M(x)_{j+k-|M|},
\end{equation}
where the sum is over all possible matchings $M$ of $\bar G$.  Note that this construction in fact gives us the unique decomposition of $P_G(x)$ into a sum of chromatic polynomials of complete graphs (the chromatic polynomial of the $n$-vertex complete graph $K_n$ being $(x)_n$).

Now suppose that, for some $1\leq p\leq j$, there are $p$ vertices in the $j$-clique of $G$ which are adjacent to every vertex of the $k$-clique.  Then these $p$ vertices are each adjacent to every other vertex of the graph.  Thus, counting proper $x$-colourings of $G$, we have that there are $(x)_p$ ways in which to colour these $p$ vertices, and $x-p$ colours remaining with which to colour the remaining vertices.  So the chromatic polynomial of $G$ will be of the form:
\[P_G(x)=(x)_pP_H(x-p),\]
where $H$ is the $(j-p,k)$-biclique obtained from $G$ by deleting each of the $p$ vertices and all incident edges.  A similar situation arises if some vertices of the $k$-clique are adjacent to every vertex of the $j$-clique.  As we are concerned with algebraic properties of the chromatic polynomial, we shall discount these cases, and assume that no vertex of $G$ is connected to every other vertex of the graph.

With this condition on $G$, it is not difficult to see that $\bar G$ will always have a single $0$-matching, along with at least one $i$-matching for all $0<i \leq j$, and that no larger matchings are possible.  Hence we have:
\begin{equation*}
P_G(x)=\sum_{i=0}^jm_{\bar G}^i(x)_{j+k-i},
\end{equation*}
where $m_{\bar G}^0=1$, and in general $m_{\bar G}^i$ is a positive integer.  Thus the chromatic polynomial of $G$ is a product of $(x)_k$ with a (usually irreducible) degree $j$ factor $g(x)$ of the form:
\begin{equation}\label{eqn:decomp}
g(x)=\sum_{i=0}^jm_{\bar G}^i(x-k)_{j-i}.
\end{equation}
It is this factor which will be our main object of study, and we will henceforth refer to it as the ``interesting factor'' of $P_G(x)$.

\section{Chromatic splitting field-equivalent bicliques}

We define the \emph{chromatic splitting field }of a graph $G$ to be the splitting field of the chromatic polynomial of $G$ --- that is, the smallest field extension of $\mathbb Q$ in which $P_G(x)$ factorises completely into linear factors.  We can indirectly study the algebraic diversity of chromatic polynomials of bicliques by investigating how common it is for two graphs in the family to have the same chromatic splitting field.  In doing so we may restrict our attention to the interesting factors of their chromatic polynomials, as multiplication by linear factors does not change a polynomial's splitting field.

Clearly, in order for two bicliques to share the same chromatic splitting field, the interesting factors of their chromatic polynomials must be of the same degree.  That is, there must be positive integers $j,k_G$ and $k_H$ such that one graph is a $(j,k_G)$-biclique and the other is a $(j,k_H)$-biclique.  The simplest way in which two such graphs might share the same chromatic splitting field is if the interesting factor of one chromatic polynomial is an integer shift of that of the other, and it is easy to show that this is always the case when the graphs' complements are matching equivalent.

\begin{prop}
Let $j,k_G$ and $k_H$ be positive integers with $j\leq k_G\leq k_H$, and let $G$ and $H$ be, respectively, a $(j,k_G)$-biclique and a $(j,k_H)$-biclique.  Denote by $g(x)$ and $h(x)$ the degree $j$ interesting factors of $P_G(x)$ and $P_H(x)$.  If $\bar G$ and $\bar H$ are matching equivalent then $g(x)=h(x+k_H-k_G)$.
\end{prop}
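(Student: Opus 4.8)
The plan is to prove the identity by direct substitution into the closed form \eqref{eqn:decomp} for the interesting factor, since the hypotheses are tailored precisely so that the integer shift absorbs the dependence on the second clique size. First I would record that formula for each graph: by \eqref{eqn:decomp} we have $g(x)=\sum_{i=0}^j m_{\bar G}^i(x-k_G)_{j-i}$ and $h(x)=\sum_{i=0}^j m_{\bar H}^i(x-k_H)_{j-i}$, both sums running to $i=j$ because the shared clique has size $j$ and, by the reduction carried out in Section~\ref{sec:poly}, neither complement admits a matching of size exceeding $j$.

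Next I would invoke the matching-equivalence hypothesis. By definition this means $\bar G$ and $\bar H$ have identical matching numbers, so $m_{\bar G}^i=m_{\bar H}^i$ for every $i$ with $0\le i\le j$. Consequently the two sums above carry exactly the same coefficient sequence and differ only in the offset $k_G$ versus $k_H$ appearing inside the falling factorials.

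The final step is the substitution $x\mapsto x+k_H-k_G$: evaluating, $h(x+k_H-k_G)=\sum_{i=0}^j m_{\bar H}^i\bigl((x+k_H-k_G)-k_H\bigr)_{j-i}=\sum_{i=0}^j m_{\bar G}^i(x-k_G)_{j-i}$, where the inner argument collapses to $x-k_G$ and the coefficients are rewritten using matching equivalence. This is precisely $g(x)$, which completes the argument. There is no genuine obstacle here; the only point requiring care is to confirm that the argument of every falling factorial reduces to $x-k_G$ \emph{uniformly} in the index $i$, which holds because the shift $k_H-k_G$ is independent of $i$. The entire content of the proposition rests on the formula \eqref{eqn:decomp}: that expression depends on the second clique size only through a uniform translation of the variable, and the stated shift is exactly the one that undoes it.
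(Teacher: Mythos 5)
Your proof is correct and follows exactly the paper's own argument: apply the closed form \eqref{eqn:decomp} to both graphs, use matching equivalence to identify the coefficients, and observe that the substitution $x\mapsto x+k_H-k_G$ converts one sum into the other. You merely write out the final substitution more explicitly than the paper does.
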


\begin{proof}
Suppose that $\bar G$ and $\bar H$ are matching equivalent.  As $m_{\bar H}^i=m_{\bar G}^i$ for all $0\leq i\leq j$, we have from (\ref{eqn:decomp}) that
\[g(x)=\sum_{i=0}^jm_{\bar G}^i(x-k_G)_{j-i}\]
and
\[h(x)=\sum_{i=0}^jm_{\bar G}^i(x-k_H)_{j-i},\]
so clearly $g(x)=h(x+k_H-k_G)$.
\end{proof}
\qed

Now, let $g(x)$ and $h(x)$ be polynomials of equal degree, and suppose there exists some integer $c$ such that $g(x)=(-1)^jh(-x+c)$; we will describe this scenario by writing that $g(x)$ and $h(x)$ are \emph{reflections} of each other.  Clearly two such polynomials have the same splitting field.  It turns out to be surprisingly common for two bicliques to have chromatic polynomials with interesting factors which are reflections of each other.  The following theorem gives a necessary and sufficient condition under which this occurs.

\begin{thm}\label{thm:match}
Let $j,k_G$ and $k_H$ be positive integers satisfying $j\leq k_G \leq k_H$, and let $G$ and $H$ be, respectively, a $(j,k_G)$-biclique and a $(j,k_H)$-biclique, having chromatic polynomials $P_G(x)=(x)_{k_G}g(x)$ and $P_H(x)=(x)_{k_H}h(x)$.  Then $g(x)=(-1)^jh(-x+c)$ for some integer $c$ if and only if
\begin{equation}\label{eqn:general}
m_{\bar G}^i=\sum_{l=0}^i(-1)^lm_{\bar H}^l{j-l \choose j-i}(k_G+k_H+j-c-l-1)_{i-l},
\end{equation}
for all $0\leq i\leq j$.
\end{thm}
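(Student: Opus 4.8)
The plan is to compute $(-1)^j h(-x+c)$ explicitly as a polynomial, re-express it in the same basis $\{(x-k_G)_{j-i}\}_{i=0}^j$ in which $g(x)$ is written in (\ref{eqn:decomp}), and then compare coefficients. Since the falling factorials $(x-k_G)_{j-i}$, $0\leq i\leq j$, have distinct degrees $j,j-1,\ldots,0$, they form a basis for the space of polynomials of degree at most $j$; hence the polynomial identity $g(x)=(-1)^jh(-x+c)$ holds if and only if the corresponding coefficients agree for every $i$. This delivers both directions of the equivalence simultaneously, so no separate arguments for necessity and sufficiency are needed.

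Two elementary identities drive the computation. First, the reflection of a falling factorial: for any $m$,
\[(-z)_m=(-z)(-z-1)\cdots(-z-m+1)=(-1)^m(z+m-1)_m.\]
Applying this to each summand of $h(-x+c)=\sum_{l=0}^jm_{\bar H}^l(-x+c-k_H)_{j-l}$, with $z=x+k_H-c$, converts the negated arguments into ordinary falling factorials and produces, after multiplying by $(-1)^j$,
\[(-1)^jh(-x+c)=\sum_{l=0}^j(-1)^lm_{\bar H}^l\,(x+k_H-c+j-l-1)_{j-l}.\]
Second, the Vandermonde convolution for falling factorials, $(u+b)_n=\sum_{r=0}^n\binom{n}{r}(u)_r(b)_{n-r}$, expands each term into the target basis once I set $u=x-k_G$ and $b=k_G+k_H-c+j-l-1$, so that $(x+k_H-c+j-l-1)_{j-l}=(u+b)_{j-l}$.

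Carrying this out, writing $r=j-i$ (so that $(u)_r=(x-k_G)_{j-i}$ and $(b)_{(j-l)-r}=(b)_{i-l}$ while $\binom{j-l}{r}=\binom{j-l}{j-i}$), and interchanging the order of the double summation so that $i$ runs from $0$ to $j$ and $l$ from $0$ to $i$, collects the coefficient of $(x-k_G)_{j-i}$ into exactly $\sum_{l=0}^i(-1)^lm_{\bar H}^l\binom{j-l}{j-i}(k_G+k_H+j-c-l-1)_{i-l}$. Matching this against the coefficient $m_{\bar G}^i$ of $g(x)$ from (\ref{eqn:decomp}) yields (\ref{eqn:general}). The main obstacle is purely bookkeeping: keeping the shift $b$ and the binomial argument correct through the reflection and convolution steps, and checking that the index interchange produces precisely the summation range $0\leq l\leq i$ appearing in the statement. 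Once the two identities are applied cleanly, the theorem follows by comparison of coefficients in the basis $\{(x-k_G)_{j-i}\}$.
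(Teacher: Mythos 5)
Your proposal is correct and is essentially the paper's own proof run in the opposite direction: the same two identities (the reflection $(-z)_m=(-1)^m(z+m-1)_m$ and the Vandermonde convolution for falling factorials) convert $(-1)^jh(-x+c)$ into the basis $\{(x-k_G)_{j-i}\}$, whereas the paper substitutes (\ref{eqn:general}) into $g(x)$ and collapses the double sum to reach $(-1)^jh(-x+c)$. Your explicit appeal to the linear independence of the falling factorials of distinct degrees is a small improvement, since it makes both directions of the equivalence immediate where the paper only says the converse follows by reversing the steps.
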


\begin{proof}
Suppose that the stated condition holds. From (\ref{eqn:decomp}), we have that:
\[g(x)=\sum_{i=0}^jm_{\bar G}^i(x-k_G)_{j-i},\]
and so substituting for $m_{\bar G}^i$ we get:
\begin{align*}
g(x)=&\sum_{i=0}^j\sum_{l=0}^i(-1)^lm_{\bar H}^l{j-l \choose j-i}(k_G+k_H+j-c-l-1)_{i-l}(x-k_G)_{j-i}\\
    =&\sum_{l=0}^j(-1)^lm_{\bar H}^l\sum_{i=0}^{j-l}{j-l \choose j-i-l}(k_G+k_H+j-c-l-1)_i(x-k_G)_{j-i-l}\\
    =&\sum_{l=0}^j(-1)^lm_{\bar H}^l\sum_{i=0}^{j-l}{j-l \choose i}(k_G+k_H+j-c-l-1)_i(x-k_G)_{j-i-l}\\
    =&\sum_{l=0}^j(-1)^lm_{\bar H}^l(x+k_H+j-c-l-1)_{j-l}\\
    =&\sum_{l=0}^j(-1)^lm_{\bar H}^l(-1)^{j-l}(-x-k_H+c)_{j-l}\\
    =&(-1)^j\sum_{l=0}^jm_{\bar H}^l(-x-k_H+c)_{j-l}\\
    =&(-1)^jh(-x+c).
\end{align*}
The converse is proved by simply reversing these steps.

\end{proof}
\qed

Theorem \ref{thm:match} is quite technical, and does not appear to significantly increase our intuitive understanding of which pairs of bicliques have chromatic polynomials with reflected interesting factors.  However, from it we are able to deduce an interesting consequence.  We will require the following lemma, which is essentially a specialisation or rephrasing of similar results by, among others, Riordan \cite{Riordan:Introduction}, Farrell and Whitehead \cite{Farrell:Connections} and Zaslavsky \cite{Zaslavsky:Complementary}.  For the sake of completion, we shall include a full proof here.

\begin{lem}\label{lem:match}
Let $\bar G$ be a spanning subgraph of the complete bipartite graph $K_{j,k}$, where $j\leq k$, and let $\bar H$ be the complement of $G$ in $K_{j,k}$.  Then:
\[m_{\bar G}^i=\sum_{l=0}^i(-1)^lm_{\bar H}^l{j-l \choose j-i}(k-l)_{i-l}.\]
\end{lem}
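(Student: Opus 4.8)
The plan is to prove the identity by a direct inclusion–exclusion argument, counting $i$-matchings of $\bar G$ as those $i$-matchings of $K_{j,k}$ that avoid every edge of $\bar H$. Since $\bar G$ and $\bar H$ partition the edge set of $K_{j,k}$, a set of $i$ independent edges of $K_{j,k}$ is an $i$-matching of $\bar G$ precisely when none of its edges lies in $\bar H$. The whole argument will therefore be a sieve over the ``forbidden'' $\bar H$-edges.

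First I would fix an $i$-matching $M$ of $K_{j,k}$ and let $B(M)\subseteq M$ denote the subset of its edges belonging to $\bar H$. Using the standard indicator identity $\sum_{S\subseteq B(M)}(-1)^{|S|}=[B(M)=\emptyset]$, one writes
\[m_{\bar G}^i=\sum_{M}[B(M)=\emptyset]=\sum_{M}\sum_{S\subseteq B(M)}(-1)^{|S|},\]
where $M$ ranges over all $i$-matchings of $K_{j,k}$. Interchanging the order of summation, the right-hand side becomes a sum over matchings $S$ contained in $\bar H$, each weighted by $(-1)^{|S|}$ and by the number of $i$-matchings $M$ of $K_{j,k}$ that contain $S$.

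The key combinatorial step is to count, for a fixed $l$-matching $S$ of $\bar H$, the number of $i$-matchings of $K_{j,k}$ extending it. Such an $S$ saturates exactly $l$ vertices on each side of $K_{j,k}$, and the unsaturated vertices induce a copy of $K_{j-l,k-l}$; an extension of $S$ to an $i$-matching of $K_{j,k}$ is then exactly an $(i-l)$-matching of this smaller complete bipartite graph. There are $\binom{j-l}{i-l}\binom{k-l}{i-l}(i-l)!=\binom{j-l}{i-l}(k-l)_{i-l}$ of these. Since this count depends only on $l=|S|$ and not on the particular matching $S$, summing over all $l$-matchings of $\bar H$ contributes a factor $m_{\bar H}^l$. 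Letting $l$ run from $0$ to $i$ gives
\[m_{\bar G}^i=\sum_{l=0}^i(-1)^lm_{\bar H}^l\binom{j-l}{i-l}(k-l)_{i-l},\]
and the symmetry $\binom{j-l}{i-l}=\binom{j-l}{j-i}$ rewrites this as the stated formula.

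I expect the main point requiring care to be the justification that the extension count is genuinely independent of the chosen $l$-matching $S$ — that deleting any $l$ saturated vertices from each side always leaves the same graph $K_{j-l,k-l}$ — together with the clean interchange of summation that converts the sum over pairs $(M,S)$ into an outer sum over $\bar H$-matchings. Once these bookkeeping points are settled, together with the elementary falling-factorial count of $(i-l)$-matchings in $K_{j-l,k-l}$, the result follows immediately.
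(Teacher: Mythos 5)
Your proof is correct and follows essentially the same route as the paper: an inclusion--exclusion sieve over the forbidden $\bar H$-edges, reduced to counting the $i$-matchings of $K_{j,k}$ that extend a fixed $l$-matching, which both you and the paper evaluate as ${j-l \choose i-l}(k-l)_{i-l}$ before applying the binomial symmetry. The only cosmetic difference is that you phrase the sieve via the indicator identity $\sum_{S\subseteq B(M)}(-1)^{|S|}$ and view the extension count as the number of $(i-l)$-matchings of the residual $K_{j-l,k-l}$, whereas the paper states the Principle of Inclusion--Exclusion over intersections $\bigcap_{e\in I}\mathcal{A}_e^i$ and counts extensions by choosing vertices directly; these are the same argument.
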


\begin{proof}
Given some $0\leq i\leq j,$ let $\mathcal{X}^i$ be the set of all $i$-matchings of $K_{j,k}$, and for each edge $e$ of $K_{j,k}$ let $\mathcal{A}_e^i\subset \mathcal{X}^i$ be the collection of those $i$-matchings containing $e$.  Label the edges of $\bar H$ as $\{1,2,\ldots ,m\}$.  The $i$-matchings of $\bar G$ are simply those $i$-matchings of $K_{j,k}$ not containing any edge of $\bar H$, and so the number of $i$-matchings of $\bar G$ is:
\[m_{\bar G}^i=\left|\mathcal{X}^i\setminus \bigcup_{e=1}^m\mathcal{A}_e^i\right|.\]
By the Principle of Inclusion-Exclusion, the right-hand side of this equation is precisely:
\begin{equation}\label{eqn:inc-ex}
\sum_{I\subseteq \{1,\ldots ,m\}}(-1)^{|I|}\left|\bigcap_{e\in I}\mathcal{A}_e^i\right|.
\end{equation}

Now, note that the $i$-matchings contained in $\bigcap_{e\in I}\mathcal{A}_e^i$ are precisely those $i$-matchings of $K_{j,k}$ containing every $e\in I$.  Furthermore, if $I\subseteq \{1,\ldots ,m\}$ is not a matching, or else has size greater than $i$, then the number of $i$-matchings of $K_{j,k}$ containing every $e\in I$ is zero; and if $I$ is a matching of size less than or equal to $i$ then the number of $i$-matchings of $K_{j,k}$ containing every $e\in I$ depends only on the cardinality of $I$ (not on its precise edge-content).  Thus (\ref{eqn:inc-ex}) is equivalent to the alternating sum, over all $0\leq l\leq i$, of the product of the number of $i$-matchings of $K_{j,k}$ containing a given $l$-matching, with the number of possible $l$-matchings of $\bar H$.

We can count the $i$-matchings of $K_{j,k}$ containing a given $l$-matching as follows: the $l$ edges of the $l$-matching join $l$ vertices on the ``$j$-side'' of $K_{j,k}$ to $l$ vertices on the ``$k$-side''.  From the remaining $j-l$ vertices on the $j$-side, we have a choice of $i-l$ to be incident to the extra edges in our desired $i$-matching.  For each such choice of $i-l$ vertices we then have $(k-l)_{i-l}$ ways to choose their neighbours on the $k$-side.  So we have that the number of $i$-matchings of $K_{j,k}$ containing a given $l$-matching is:
\[{j-l \choose i-l}(k-l)_{i-l}={j-l \choose j-i}(k-l)_{i-l}.\]

The number of possible $l$-matchings of $\bar H$ is simply $m_{\bar H}^l$, and so putting this all together we obtain from (\ref{eqn:inc-ex}) the desired expression:
\begin{equation}\label{eqn:match}
m_{\bar G}^i=\sum_{l=0}^i(-1)^lm_{\bar H}^l{j-l \choose j-i}(k-l)_{i-l}.
\end{equation}
\end{proof}
\qed

Now, let $G$ be a $(j,k)$-biclique, and let $H$ be the graph which is obtained from $G$ by replacing all bridging edges by non-edges, and vice-versa.  We will refer to $H$ as being the \emph{complementary partner} of $G$, or else to $G$ and $H$ as being \emph{complementary $(j,k)$-bicliques.} We can now prove that the members of such a pair have chromatic polynomials with reflected interesting factors.

\begin{prop}\label{prop:comp}
For some positive integers $j$ and $k$ with $j\leq k$ let $G$ and $H$ be complementary $(j,k)$-bicliques, and let $g(x)$ and $h(x)$ be the interesting factors of $P_G(x)$ and $P_H(x)$ respectively. Then:
\[g(x)=(-1)^jh(-x+j+k-1).\]
\end{prop}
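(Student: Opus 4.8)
The plan is to combine Lemma \ref{lem:match} with the sufficiency direction of Theorem \ref{thm:match}. The first and conceptually most important step is to observe that, when $G$ and $H$ are complementary $(j,k)$-bicliques, their complements $\bar G$ and $\bar H$ are bipartite complements of one another inside $K_{j,k}$. Indeed, since $H$ is obtained from $G$ by toggling the bridging edges, the bridging edges of $H$ are exactly the absent bridging edges of $G$; passing to complements, the edge set of $\bar G$ consists of the missing bridging edges of $G$, while the edge set of $\bar H$ consists of the bridging edges of $G$. These two edge sets partition $E(K_{j,k})$, so $\bar H$ is precisely the complement of $\bar G$ in $K_{j,k}$, and Lemma \ref{lem:match} applies with $k_G=k_H=k$.

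Next I would invoke Lemma \ref{lem:match} to write
\[m_{\bar G}^i=\sum_{l=0}^i(-1)^lm_{\bar H}^l{j-l \choose j-i}(k-l)_{i-l}\]
for all $0\leq i\leq j$. The goal is to recognise this as an instance of condition (\ref{eqn:general}) from Theorem \ref{thm:match} for a suitable value of $c$. Setting $k_G=k_H=k$ in (\ref{eqn:general}), the only difference between the two expressions lies in the argument of the falling factorial: the theorem carries $(k_G+k_H+j-c-l-1)_{i-l}=(2k+j-c-l-1)_{i-l}$, whereas the lemma carries $(k-l)_{i-l}$. The key (and essentially only) calculation is therefore to solve $2k+j-c-l-1=k-l$ for $c$; the $-l$ terms cancel, leaving a condition independent of $l$ that yields $c=j+k-1$.

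With this choice of $c$, condition (\ref{eqn:general}) of Theorem \ref{thm:match} holds identically for every $0\leq i\leq j$, being term-by-term equal to the relation supplied by Lemma \ref{lem:match}. Applying the sufficiency direction of Theorem \ref{thm:match} then gives $g(x)=(-1)^jh(-x+c)=(-1)^jh(-x+j+k-1)$, as required.

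I do not expect a genuine obstacle here: the argument is really an alignment of two formulae, and the algebra collapses cleanly because the $l$-dependence in the falling-factorial arguments matches. The one point demanding care is the combinatorial identification in the first step --- verifying that ``complementary partner'' in the biclique sense corresponds exactly to ``bipartite complement'' for the graphs $\bar G,\bar H$ inside $K_{j,k}$ --- since everything downstream depends on being able to invoke Lemma \ref{lem:match} with $\bar H$ playing the role of the complement of $\bar G$.
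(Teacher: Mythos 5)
Your argument is correct and is essentially identical to the paper's own proof: identify $\bar G$ and $\bar H$ as complements inside $K_{j,k}$, apply Lemma \ref{lem:match}, and match the resulting identity with condition (\ref{eqn:general}) at $k_G=k_H=k$ and $c=j+k-1$ so that Theorem \ref{thm:match} applies. Your explicit check that $2k+j-c-l-1=k-l$ forces $c=j+k-1$ is a small but welcome addition the paper leaves implicit.
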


\begin{proof}
Note that $\bar G$ and $\bar H$ complement each other inside the complete bipartite graph $K_{j,k}$.  Hence, by Lemma \ref{lem:match}:
\[m_{\bar G}^i=\sum_{l=0}^i(-1)^lm_{\bar H}^l{j-l \choose j-i}(k-l)_{i-l}.\]
This expression is simply (\ref{eqn:general}) with $k_G=k_H=k$ and $c=j+k-1$, so the result follows from Theorem \ref{thm:match}.
\end{proof}
\qed

We have chosen to present this specific case in full as it is particularly aesthetically pleasing, however it should be noted that there are many more pairs of bicliques other than complementary pairs which have chromatic polynomials with reflected interesting factors.  With the final section of this paper in mind, we will prove one of these for the case $j=3$, and mention two more.

We can parameterise a $(3,k)$-clique $G$ in the following way: label the three vertices in the $3$-clique $v_1,v_2$ and $v_3$; let $a, b$ and $c$ represent the number of neighbours of $v_1,v_2$ and $v_3$ respectively in the $k$-clique; and let $d,e$ and $f$ represent the number of vertices in the $k$-clique joined to both $v_2$ and $v_3$, both $v_1$ and $v_3$, and both $v_1$ and $v_2$ respectively.  Then the 6-tuple $(a,b,c,d,e,f)$ completely describes $G$, and we will simply write that $G=(a,b,c,d,e,f)$.
  
It will be helpful for what follows to point out some properties of the complement of this graph.  So note that the order of $\bar G$ is
\[|V(\bar G)|=a+b+c+d+e+f+3;\]
that the number of edges of $\bar G$ is
\[|E(\bar G)|=2a+2b+2c+d+e+f;\]
and that the degrees of $v_1,v_2$ and $v_3$ in $\bar G$ are $(b+c+d),$ $(a+c+e)$ and $(a+b+f)$ respectively.

\begin{prop}\label{prop:reflect1}
Let $r,s,t,u$ and $v$ be non-negative integers satisfying $u+v=4t-r-s+3$, and let $G=(r,s,t,t,t,u)$ and $H=(r,s,t,t,t,v)$ be $(3,k)$-bicliques, having chromatic polynomials with interesting factors $g(x)$ and $h(x)$ respectively.  Then:
\[g(x)=-h(-x+6t+4).\]
\end{prop}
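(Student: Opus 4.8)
The plan is to establish the reflection by verifying the hypothesis of Theorem~\ref{thm:match} directly, with $j=3$, reflection constant $c=6t+4$, and clique sizes $k_G=r+s+3t+u$ and $k_H=r+s+3t+v$ (each read off from $|V(\bar G)|=a+b+c+d+e+f+3$). The first task is to record the matching numbers of $\bar G$. Using the degrees $\deg(v_1)=s+2t$, $\deg(v_2)=r+2t$ and $\deg(v_3)=r+s+u$ (the specialisation to $(r,s,t,t,t,u)$ of the formulas recorded above), the facts that $v_2,v_3$ have $r$ common neighbours in $\bar G$, that $v_1,v_3$ have $s$, and that $v_1,v_2$ have $t$ (these being the parameters $a,b,c$ of $G$), and the fact that --- universal vertices having been discounted --- no $k$-side vertex is adjacent to all three of $v_1,v_2,v_3$, elementary counting gives $m_{\bar G}^0=1$ and $m_{\bar G}^1=2r+2s+4t+u$, together with $m_{\bar G}^2=\sum_{i<i'}\deg(v_i)\deg(v_{i'})-(r+s+t)$ and $m_{\bar G}^3=\deg(v_1)\deg(v_2)\deg(v_3)-\bigl(r\deg(v_1)+s\deg(v_2)+t\deg(v_3)\bigr)$ (a $2$-matching being a choice of two of the $v_i$ sent to distinct neighbours, and a $3$-matching a system of distinct representatives). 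Since $G$ and $H$ differ only in the parameter $f$, which alters $\deg(v_3)$ alone and none of the pairwise common-neighbour counts, the numbers $m_{\bar H}^i$ are obtained from the $m_{\bar G}^i$ by the single substitution $u\mapsto v$.

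The second task is to simplify the falling factorial in (\ref{eqn:general}). With these clique sizes one has $k_G+k_H+j-c-l-1=(r+s+10t+3)-(6t+2)-l=r+s+4t+1-l$, and it is exactly in collapsing $k_G+k_H=2r+2s+6t+(u+v)$ to $r+s+10t+3$ that the hypothesis $u+v=4t-r-s+3$ is invoked. It then remains to check, for each $i\in\{0,1,2,3\}$, that
\[m_{\bar G}^i=\sum_{l=0}^i(-1)^lm_{\bar H}^l{3-l \choose 3-i}(r+s+4t+1-l)_{i-l}.\]
The case $i=0$ is the identity $1=1$, and the case $i=1$ reads $m_{\bar G}^1=3(r+s+4t+1)-m_{\bar H}^1$, which on substitution of the first two matching numbers rearranges to precisely $u+v=4t-r-s+3$; thus at this order the hypothesis is not merely sufficient but forced.

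The substantive content lies in the cases $i=2$ and $i=3$. Expanding the falling factorials turns each into a polynomial identity in $r,s,t,u,v$; substituting the explicit matching numbers and reducing, each identity collapses to the single relation $u+v=4t-r-s+3$, the asymmetry between the $u$ carried by $m_{\bar G}^i$ and the $v$ carried by $m_{\bar H}^i$ being absorbed by exactly this relation. I expect this reduction to be the main obstacle: it is wholly mechanical but lengthy and error-prone, the cubic $i=3$ case most of all. Once all four identities are confirmed, Theorem~\ref{thm:match} yields $g(x)=(-1)^3h(-x+6t+4)=-h(-x+6t+4)$, as required. Should the binomial bookkeeping prove unwieldy, an equivalent route is to bypass Theorem~\ref{thm:match} entirely: assemble $g$ and $h$ as explicit monic cubics from (\ref{eqn:decomp}) and verify the single polynomial identity $g(x)+h(-x+6t+4)=0$ by matching the coefficients of $x^2$, $x$ and $1$, again using $u+v=4t-r-s+3$.
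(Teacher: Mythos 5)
Your proposal is correct and takes essentially the same route as the paper: both verify condition (\ref{eqn:general}) of Theorem \ref{thm:match} with $j=3$, $c=6t+4$, $k_G=r+s+3t+u$ and $k_H=r+s+3t+v$, arrive at the same falling factorial $(r+s+4t+1-l)_{i-l}$, and leave the $i=2,3$ identities as a mechanical check (as the paper also does). The only divergence is cosmetic --- you compute $m_{\bar G}^2$ and $m_{\bar G}^3$ by inclusion--exclusion on the degrees and pairwise common neighbours of the $v_i$, whereas the paper splits $\bar G$ into auxiliary subgraphs $A$ and $B$ and counts edge pairs and triples, the resulting expressions being identical --- though note that the absence of a common $\bar G$-neighbour of all three $v_i$ follows from the parameterisation $k=a+b+c+d+e+f$ rather than from the exclusion of universal vertices, which are the $k$-clique vertices \emph{isolated} from the triangle in $\bar G$.
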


\begin{proof}
First note that the number of vertices connected only to $v_1,v_2$ and $v_3$ in $\bar G$ are $t,t$ and $u$ respectively, and the corresponding values for $\bar H$ are $t,t$ and $v$.  Furthermore, in both $\bar G$ and $\bar H$ the number of vertices connected to both $v_2$ and $v_3$, both $v_1$ and $v_3$, and both $v_1$ and $v_2$ are $r,s$ and $t$ respectively.

Now, it is clear that $m_{\bar G}^0=m_{\bar H}^0=1$.  The matching numbers $m_{\bar G}^1$ and $m_{\bar H}^1$ can also be easily found, as they are simply the numbers of edges of $\bar G$ and $\bar H$, that is:
\[m_{\bar G}^1=2r+2s+4t+u\]
and
\[m_{\bar H}^1=2r+2s+4t+v.\]

Now, let $B$ be the subgraph of $\bar G$ resulting from the removal of those edges incident only to $v_3$ and no other $v_i$, and let $A$ be the subgraph of $B$ obtained by removing $v_3$ and all remaining incident edges.  For $l=2$ or $l=3$, we can split each $l$-matching of $\bar G$ into one of two groups: those containing one of the $u$ edges incident to $v_3$ and no other $v_i$, and those not containing such an edge.  The former consists of every $l$-matching which is a union of an $(l-1)$-matching of $A$ with one of the $u$ edges in question; the latter are simply the $l$-matchings of $B$.  So we have, for $l=2$ or $3$:
\begin{equation}\label{eqn:matchsplit}
m_{\bar G}^l=um_A^{l-1}+m_B^l.
\end{equation}

By enumerating the edges of $A$ we immediately find that $m_A^1=r+s+4t$.  The 2-matchings of $A$ can be counted by multiplying the number of vertices adjacent to $v_1$ by the number adjacent to $v_2$, and subtracting $t$ (as there are $t$ vertices adjacent to both $v_1$ and $v_2$), giving us:
\[m_A^2=(r+2t)(s+2t)-t=rs+2rt+2st+4t^2-t.\]

It remains to find $m_B^2$ and $m_B^3$.  The 2-matchings can be found by subtracting the number of pairs of edges of $B$ which are incident to a common vertex from the total number of pairs of edges of $B$.   The total number of pairs of edges of $B$ is:
\[{|E(B)| \choose 2}={2r+2s+4t \choose 2}=\frac{1}{2}(2r+2s+4t)(2r+2s+4t-1),\]
and if we represent by $d(v)$ the degree of the vertex $v$, then the number of pairs which are incident to a common vertex is:
\begin{align*}
\sum_{v\in V(B)}{d(v) \choose 2}&=\frac{1}{2}\left(\sum_{v\in V(B)}d(v)^2-d(v)\right)\\
                                &=\frac{1}{2}\left(-2|E(B)|+\sum_{v\in V(B)}d(v)^2\right)\\
                                &=s^2+2st+4t^2+r^2+2rt+rs-t.
\end{align*}

Thus:\begin{align*}
m_B^2&=\frac{1}{2}(2r+2s+4t)(2r+2s+4t-1)\\
     & -(s^2+2st+4t^2+r^2+2rt+rs-t)\\
     &=r^2+3rs+6rt-r+s^2+6st-s+4t^2-t.
\end{align*}

To count 3-matchings, note that the total number of choices of 3 edges such that one is incident to each of the $v_i$ is
\[(s+2t)(r+2t)(r+s).\]
From this we will need to subtract: 
\begin{enumerate}
  \item the $t(r+s)$ 3-matchings in which the chosen edges incident to $v_1$ and $v_2$ share a common endpoint;
  \item the $s(r+2t)$ 3-matchings in which the chosen edges incident to $v_1$ and $v_3$ share a common endpoint; and: 
  \item the $r(s+2t)$ 3-matchings in which the chosen edges incident to $v_2$ and $v_3$ share a common endpoint.
\end{enumerate}

This gives us:
\[m_B^3=(s+2t)(r+2t)(r+s)-t(r+s)-s(r+2t)-r(s+2t).\]

Finally, substituting all of these into (\ref{eqn:matchsplit}), we have:
\[m_{\bar G}^2=u(r+s+4t)+r^2+3rs+6rt-r+s^2+6st-s+4t^2-t,\]
and
\begin{align*}
m_{\bar G}^3&=u(rs+2rt+2st+4t^2-t)\\
            &+(s+2t)(r+2t)(r+s)-t(r+s)-s(r+2t)-r(s+2t).
\end{align*}

The matching numbers of $\bar H$ can now be derived by simply substituting $v$ for $u$ in these expressions, giving:
\[m_{\bar H}^2=v(r+s+4t)+r^2+3rs+6rt-r+s^2+6st-s+4t^2-t,\]
and
\begin{align*}
m_{\bar H}^3&=v(rs+2rt+2st+4t^2-t)\\
            &+(s+2t)(r+2t)(r+s)-t(r+s)-s(r+2t)-r(s+2t).
\end{align*}
It is now simple, if rather tedious, to verify that for each $0\leq i\leq 3$:
\[m_{\bar G}^i=\sum_{l=0}^i(-1)^lm_{\bar H}^l{3-l \choose 3-i}(4t+r+s+1-l)_{i-l}.\]
This equation is simply (\ref{eqn:general}) with the substitutions:
\begin{align*}
k_G&=r+s+3t+u\\
k_H&=r+s+3t+v\\
j&=3\\
c&=6t+4,
\end{align*}
which means that $G$ and $H$ satisfy the conditions of Theorem \ref{thm:match}, and
\[g(x)=-h(-x+6t+4).\]
\end{proof}
\qed

The proofs of the following two results proceed in exactly the same way as that of Proposition \ref{prop:reflect1}, and so we will spare the reader the details.  In both cases $G$ and $H$ are $(3,k)$-bicliques having chromatic polynomials with interesting factors $g(x)$ and $h(x)$ respectively.

\begin{prop}
Let $r,s,t,u$ and $v$ be non-negative integers satisfying $u+v=4t-2r+4$. If $G=(r,r+s-1,t,t,s+t,u)$ and $H=(r,r+s-1,t,t,s+t,v),$ then:
\[g(x)=-h(-x+2s+6t+4).\]
\end{prop}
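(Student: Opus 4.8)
The plan is to follow the proof of Proposition~\ref{prop:reflect1} step for step, reducing the whole statement to a verification of condition~(\ref{eqn:general}) of Theorem~\ref{thm:match} for the parameters at hand. For $G=(r,r+s-1,t,t,s+t,u)$ one computes $k_G=2r+2s+3t-1+u$, and likewise $k_H=2r+2s+3t-1+v$; I would apply Theorem~\ref{thm:match} with $j=3$ and with the integer $c=2s+6t+4$, so that $(-1)^jh(-x+c)=-h(-x+2s+6t+4)$ is precisely the target. The first thing to verify is that these choices, together with the hypothesis $u+v=4t-2r+4$, collapse the argument of the falling factorial in~(\ref{eqn:general}): a short computation gives $k_G+k_H+j-c-1=2r+2s+4t$, so the identity to be checked becomes
\[m_{\bar G}^i=\sum_{l=0}^i(-1)^lm_{\bar H}^l{3-l \choose 3-i}(2r+2s+4t-l)_{i-l},\qquad 0\leq i\leq 3,\]
which is exactly the shape of Lemma~\ref{lem:match} with $k$ replaced by $2r+2s+4t$.

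With this target fixed, the remaining work is the computation of the matching numbers of $\bar G$ and $\bar H$. I would read the structure of $\bar G$ off the $6$-tuple as before: in $\bar G$ the numbers of $k$-clique vertices adjacent to precisely $\{v_1\},\{v_2\},\{v_3\}$ are $t,\,s+t,\,u$, and to precisely $\{v_2,v_3\},\{v_1,v_3\},\{v_1,v_2\}$ are $r,\,r+s-1,\,t$; the description of $\bar H$ is identical except that the count for $\{v_3\}$ is $v$ in place of $u$. Hence $m_{\bar G}^0=m_{\bar H}^0=1$, and the edge-count formula gives $m_{\bar G}^1=|E(\bar G)|=4r+3s+4t-2+u$, with $m_{\bar H}^1=4r+3s+4t-2+v$.

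Because $G$ and $H$ differ only in the number of $k$-clique vertices joined solely to $v_3$, the splitting device of Proposition~\ref{prop:reflect1} applies without change: let $B$ be $\bar G$ with the $u$ edges incident only to $v_3$ removed, and let $A=B-v_3$. As $A$ and $B$ differ between the two graphs only in isolated vertices, $m_A^l$ and $m_B^l$ carry no $u$-dependence, and for $l=2,3$
\[m_{\bar G}^l=u\,m_A^{l-1}+m_B^l,\qquad m_{\bar H}^l=v\,m_A^{l-1}+m_B^l.\]
The ingredients are the degrees in $B$, namely $d_B(v_1)=r+s+2t-1$, $d_B(v_2)=r+s+2t$ and $d_B(v_3)=2r+s-1$, together with the numbers of common neighbours in $B$ of the three pairs, which are $t$ for $(v_1,v_2)$, $r+s-1$ for $(v_1,v_3)$ and $r$ for $(v_2,v_3)$. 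These feed the by-now-standard formulas $m_A^1=d_B(v_1)+d_B(v_2)$, $m_A^2=d_B(v_1)d_B(v_2)-t$ and $m_B^2={|E(B)| \choose 2}-\sum_{v}{d_B(v) \choose 2}$ with $|E(B)|=4r+3s+4t-2$, and
\[m_B^3=d_B(v_1)d_B(v_2)d_B(v_3)-t\,d_B(v_3)-(r+s-1)d_B(v_2)-r\,d_B(v_1),\]
the last two relying, as in the earlier proof, on the absence of any $k$-clique vertex joined to all three of $v_1,v_2,v_3$, so that no configuration is subtracted twice.

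The main obstacle is purely the final bookkeeping. After assembling $m_{\bar G}^2=u\,m_A^1+m_B^2$, $m_{\bar G}^3=u\,m_A^2+m_B^3$ and their $H$-analogues, one must confirm the four instances of the displayed identity: a tedious but entirely mechanical polynomial check in $r,s,t,u$, in which the constraint $u+v=4t-2r+4$ is used to eliminate $v$ from the right-hand side. I expect $i=2$ and especially $i=3$ to be the heaviest cases, where the cubic matching numbers interact with the falling factorials $(2r+2s+4t-l)_{i-l}$; but having already reduced the right-hand side to the clean Lemma~\ref{lem:match} form, this verification is identical in spirit to that of Proposition~\ref{prop:reflect1}, and yields $g(x)=-h(-x+2s+6t+4)$ via Theorem~\ref{thm:match}.
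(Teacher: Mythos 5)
Your proposal is correct and takes exactly the route the paper intends (the paper omits this proof, saying only that it proceeds as for Proposition~\ref{prop:reflect1}): the identifications $k_G=2r+2s+3t-1+u$, $k_H=2r+2s+3t-1+v$, $c=2s+6t+4$, the collapse of the falling-factorial argument to $2r+2s+4t$ via $u+v=4t-2r+4$, and the matching-number computations through the $A$/$B$ decomposition all check out. The only step left implicit is the final mechanical polynomial verification of (\ref{eqn:general}) for $i=0,\dots,3$, which is precisely the part the paper also suppresses.
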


\begin{prop}
Let $r,s,t,u$ and $v$ be non-negative integers satisfying $u+v=4s-2r+t^2+2t+4.$  If $G=(r,r,s,s+{t+1 \choose 2},s+{t+2 \choose 2},u)$ and $H=(r,r,s,s+{t+1 \choose 2},s+{t+2 \choose 2},v),$ then:
\[g(x)=-h(-x+6s+2t^2+4t+6).\]
\end{prop}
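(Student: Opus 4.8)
The plan is to mimic the proof of Proposition~\ref{prop:reflect1} step for step: compute the matching numbers of $\bar G$ and $\bar H$ up to index $3$, read off the parameters with which to apply Theorem~\ref{thm:match}, and verify the identity~(\ref{eqn:general}).

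First I would record the structure of $\bar G$. Its $k$-side vertices split according to their neighbourhoods in $\{v_1,v_2,v_3\}$: there are $s+\binom{t+1}{2}$ joined only to $v_1$, $s+\binom{t+2}{2}$ joined only to $v_2$, and $u$ joined only to $v_3$, while $r$, $r$ and $s$ are joined to $\{v_2,v_3\}$, $\{v_1,v_3\}$ and $\{v_1,v_2\}$ respectively; $\bar H$ differs only in that $u$ is replaced by $v$. Using $\binom{t+1}{2}+\binom{t+2}{2}=(t+1)^2$, the degrees in $\bar G$ are $\deg(v_1)=r+2s+\binom{t+1}{2}$, $\deg(v_2)=r+2s+\binom{t+2}{2}$ and $\deg(v_3)=2r+u$, whence $m_{\bar G}^0=1$ and $m_{\bar G}^1=|E(\bar G)|=4r+4s+(t+1)^2+u$.

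For $l=2,3$ I would reuse the splitting~(\ref{eqn:matchsplit}): with $B$ the graph obtained from $\bar G$ by deleting the $u$ edges meeting $v_3$ alone, and $A=B-v_3$, one has $m_{\bar G}^l=u\,m_A^{l-1}+m_B^l$. Since $A$ is bipartite with $v_1,v_2$ its only left vertices, $m_A^1=\deg(v_1)+\deg(v_2)=2r+4s+(t+1)^2$ and $m_A^2=\deg(v_1)\deg(v_2)-s$ (the $s$ subtracted being the common neighbours of $v_1,v_2$). The numbers $m_B^2$ and $m_B^3$ are computed exactly as in Proposition~\ref{prop:reflect1}: $m_B^2$ from $\binom{|E(B)|}{2}-\sum_v\binom{d(v)}{2}$, the only degree-$2$ $k$-vertices being the $2r+s$ vertices meeting two of the $v_i$, and $m_B^3=\deg_B(v_1)\deg_B(v_2)\deg_B(v_3)$ (with $\deg_B(v_3)=2r$) minus the three products accounting for coincident endpoints. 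Replacing $u$ by $v$ gives the numbers $m_{\bar H}^i$.

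The parameters for Theorem~\ref{thm:match} are $j=3$, $k_G=2r+3s+u+(t+1)^2$, $k_H=2r+3s+v+(t+1)^2$ and $c=6s+2t^2+4t+6$; the hypothesis $u+v=4s-2r+t^2+2t+4$ then gives $k_G+k_H+j-c-1=2r+4s+(t+1)^2+1$, the falling-factorial base in~(\ref{eqn:general}). It remains to verify
\[
m_{\bar G}^i=\sum_{l=0}^i(-1)^l m_{\bar H}^l\binom{3-l}{3-i}\bigl(2r+4s+(t+1)^2+1-l\bigr)_{i-l}
\]
for $0\le i\le 3$. The cases $i=0,1$ are immediate, the latter collapsing to precisely the hypothesis on $u+v$; the cases $i=2,3$ are polynomial identities in $r,s,t$. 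Expanding and comparing these is the only real obstacle, and is routine but appreciably heavier than in Proposition~\ref{prop:reflect1} owing to the $\binom{t+1}{2},\binom{t+2}{2}$ terms. I would tame it by separating each side into its $(u,v)$-linear part, which cancels via the $u+v$ constraint exactly as at $i=1$, and its purely $r,s,t$ part. With~(\ref{eqn:general}) confirmed, Theorem~\ref{thm:match} delivers $g(x)=(-1)^3h(-x+c)=-h(-x+6s+2t^2+4t+6)$.
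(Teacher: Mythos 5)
Your proposal is correct and follows exactly the route the paper intends: the paper states that this proposition is proved ``in exactly the same way as that of Proposition~\ref{prop:reflect1},'' and your computation of the neighbourhood structure of $\bar G$, the matching numbers via the splitting $m_{\bar G}^l=u\,m_A^{l-1}+m_B^l$, and the parameters $k_G$, $k_H$, $c$ and the falling-factorial base $2r+4s+(t+1)^2+1$ all check out (in particular the $i=1$ case does collapse to the $u+v$ constraint). Leaving the $i=2,3$ polynomial identities as a routine expansion is consistent with the paper, which does the same even in the fully written-out case.
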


This is just a sample; there are likely to be more such relations for the relatively simple case $j=3$, and no doubt many more for larger $j$.  Even the few examples we have presented do however suggest patterns which invite further investigation.  It would be desirable to find a more graph-theoretic classification of pairs of bicliques having chromatic polynomials which are related by a reflection than that given by Theorem \ref{thm:match}.  In addition, note that the pivotal relation between the chromatic polynomials of these graphs and matchings of their complements in fact holds for any triangle-free graph (see \cite{Farrell:Connections} for a proof), raising the possibility that our results might generalise to larger classes of graphs.

We finish this section with an observation of a link between proper colourings and acyclic orientations of these special pairs of bicliques.  For some positive integers $j$ and $k$ satisfying $j\leq k$ let $G$ and $H$ be $(j,k)$-bicliques having chromatic polynomials with interesting factors $g(x)$ and $h(x)$ respectively, and suppose that $g(x)=(-1)^jh(-x+c)$ for some integer $c$.  Then:
\[g(x+c)=(-1)^jh(-x),\]
and so
\[P_G(x+c)=(-1)^j\frac{(x+c)_k}{(-x)_k}P_H(-x)=(-1)^{j+k}\frac{(x+c)_k}{(x+k-1)_k}P_H(-x).\]
Evaluating this equation at $x=1$ gives:
\begin{equation}\label{eqn:acyclic}
P_{G}(c+1)={c+1 \choose k}(-1)^{j+k}P_{H}(-1).
\end{equation}
Now, Stanley \cite{Stanley:Acyclic} showed that $(-1)^nP_G(-1)$ is the number of acyclic orientations of an $n$-vertex graph $G$. Thus (\ref{eqn:acyclic}) implies that the number of proper $(c+1)$-colourings of $G$ is $c+1 \choose k$ times the number of acyclic orientations of $H$.  In particular, Proposition \ref{prop:comp} implies that the number of proper $(j+k)$-colourings of a $(j,k)$-biclique is $j+k \choose k$ times the number of acyclic orientations of its complementary partner.  It seems likely that a combinatorial proof of this result could be found, which might shed some more light on the results we have presented in this section.

\section{Cubic integers as chromatic roots}

In \cite{pjc11} it was conjectured that for every algebraic integer $\alpha$ there is a natural number $n$ such that $\alpha +n$ is a chromatic root. The authors of that paper proved this conjecture for quadratic integers, but it has remained unresolved for algebraic numbers of higher degree.  In this section we prove the conjecture for the case of cubic integers by showing that, given any cubic integer, there is a $(3,k)$-biclique having a chromatic root which is an integer shift of it.

Let $G$ be a $(3,k)$-clique.  Using the same parameterisation as in the last section, we have $G=(a,b,c,d,e,f).$  We will first need to use these 6 parameters to provide an alternative construction of $P_G(x)$ from that given in \S\ref{sec:poly}.

As established previously, $P_G(x)$ is a product of $(x)_k$ with a cubic ``interesting factor'' $g(x)$.  Observe that $(x)_k$ is the number of ways to properly $x$-colour the $k$-clique of $G$; thus we can view $g(x)$ as an expression for the number of proper $x$-colourings of the $3$-clique.  We can construct this expression independently of the rest of the polynomial using the Principle of Inclusion-Exclusion, as follows.

Assuming there are no edges between any of the $\{v_i\}$, the number of ways to properly $x$-colour them is:
\begin{equation}\label{eqn:noedges}
(x-a-e-f)(x-b-d-f)(x-c-d-e).
\end{equation}
From these we must subtract those colourings in which two vertices receive the same colour.  There are, for example:
\[(x-a-b-d-e-f)(x-c-d-e)\]
ways in which to properly $x$-colour the 3 vertices such that $v_1$ and $v_2$ receive the same colour.  The other two pairs provide similar expressions, giving us three to subtract from (\ref{eqn:noedges}).  Finally we consider those colourings in which all three vertices receive the same colour.  The number of these is, simply:
\[(x-a-b-c-d-e-f).\]
As we have effectively discounted these three times in the previous step, we must add this expression twice.  Our final interesting factor is:
\begin{eqnarray}
g(x)&=&(x-a-e-f)(x-b-d-f)(x-c-d-e)\label{cp}\\
&-&\nonumber(x-a-b-d-e-f)(x-c-d-e)\\
&-&\nonumber(x-a-c-d-e-f)(x-b-d-f)\\
&-&\nonumber(x-b-c-d-e-f)(x-a-e-f)+2(x-a-b-c-d-e-f).
\end{eqnarray}

Now, any monic cubic polynomial with negative $x^2$ coefficient can be transformed via a substitution $x\rightarrow x+n, n\in \mathbb{N}$ to another monic polynomial having $x^2$ coefficient $-1, 0$, or $1$ (we will call such a polynomial \textit{reduced}).  If $\alpha$ is a root of the latter, then $\alpha +n$ is a root of the former.  Thus in order to prove our result it suffices to show that, given any reduced cubic polynomial $p(x)$, there is an interesting factor $g(x)$ and natural number $n$ such that $p(x)=g(x+n)$.

We will proceed with each of the three types of reduced polynomial in turn, showing that for each type, and for every choice of the $x-$coefficient and constant term, the $6$ parameters in the $(3,k)$-biclique construction can be chosen in such a way as to produce the desired chromatic polynomial.  There are no doubt many possible ways in which to correctly choose the parameters; in each case we will mention just one.

\subsubsection*{Case 1: $a_2=-1$}
Let $p(x)=x^3-x^2+a_1x+a_0$, and let $i$ represent any number.  Assign the below values to the parameters $a,b,c,d,e,f$:
\begin{eqnarray*}
a&=&(2n+a_0)^2-11a_0+35+a_1-(8a_0-45)i-(16i+24)n+16i^2\\
b&=&-2i+n-3\\
c&=&(2n+a_0)^2-13a_0+46+a_1-(8a_0-53)i-(16i+28)n+16i^2\\
d&=&i+1\\
e&=&-(2n+a_0)^2+12a_0-41-a_1+(8a_0-50)i+(16i+27)n-16i^2\\
f&=&i
\end{eqnarray*}
Let $g(x)$ be the polynomial obtained by substituting these values into (\ref{cp}).  Then we have
\[g(x)=x^3+(-3n-1)x^2+(3n^2+2n+a_1)x-n^3-n^2-a_1n+a_0=p(x-n),\]
as desired. It remains to show that, for any $a_0$ and $a_1,$ appropriate values for $i$ and $n$ can be found such that each of the above parameters are non-negative integers. From the expressions for $b,d$ and $f$, $i$ must be non-negative and $n$ must satisfy $n\geq2i+3$.  We introduce a new variable $t$ by making the substitution
\[n=-a_0/2+2i+t,\]
giving us new expressions for $a,c$ and $e$:
\begin{eqnarray*}
a&=&a_0+35+a_1-3i-24t+4t^2\\
c&=&a_0+46+a_1-3i-28t+4t^2\\
e&=&-3a_0/2-41-a_1+4i+27t-4t^2
\end{eqnarray*}
Requiring that all these be non-negative then gives us the three inequalities:
\begin{eqnarray}
3i&\leq&a_0+35+a_1-24t+4t^2 \label{1}\\
3i&\leq&a_0+46+a_1-28t+4t^2 \label{2}\\
4i&\geq&3a_0/2+41+a_1-27t+4t^2 \label{3}
\end{eqnarray}
Let $t$ be an integer that is greater than $3$, greater than $a_0/2 +3$, and otherwise large enough to satisfy:
\[\frac{a_0+46+a_1-28t+4t^2}{3}\geq\frac{3a_0/2+41+a_1-27t+4t^2}{4}+1.\]
There is at least one integer between the expression on the left and that on the right.  Choose $i$ to be such an integer; then the chosen values for $i$ and $t$ satisfy (\ref{2}) and (\ref{3}).  Because $t\geq 3$, (\ref{2}) implies (\ref{1}).  Finally set $n=\lceil -a_0/2\rceil +2i+t.$ Because $t> a_0/2 +3$, $n$ then satisfies the condition $n\geq2i+3$.

The remaining two cases are similar, and so will be more briefly described.

\subsubsection*{Case 2: $a_2=0$}

Let $p(x)=x^3+a_1x+a_0x$, and again let $i$ be any number.  This time set:
\begin{eqnarray*}
a&=&(n+a_0)^2+a_1+14+19i+9i^2-(6i+8)n-(6i+6)a_0\\
b&=&-2i+n-3\\
c&=&(n+a_0)^2+a_1+20+25i+9i^2-(6i+10)n-(6i+8)a0\\
d&=&i+1\\
e&=&-(n+a_0)^2-a_1-18-23i-9i^2+(6i+10)n+(6i+7)a_0\\
f&=&i
\end{eqnarray*}
Let $g(x)$ be the polynomial obtained by substituting these values into (\ref{cp}).  Then
\[g(x)=x^3-3nx^2-(3n^2-a_1+3n^2)x-n^3-a_1n+a_0=p(x-n).\]
Now make the substitution
\[n=-a_0+3i+t.\]
This gives us the following expressions for $a,c$ and $e$:
\begin{eqnarray*}
a&=&t^2+a_1+14-5i+2a_0-8t\\
c&=&t^2+a_1+20-5i+2a_0-10t\\
e&=&-t^2-a_1-18+7i-3a_0+10t,
\end{eqnarray*}
leading to the inequalities:
\begin{eqnarray*}
5i&\leq&t^2+a_1+14+2a_0-8t\\
5i&\leq&t^2+a_1+20+2a_0-10t\\
7i&\geq&t^2+a_1+18+3a_0+10t.
\end{eqnarray*}
Again, by choosing $t$ to be very large, a positive value for $i$ can be found to satisfy these for any $a_0,a_1$.

\subsubsection*{Case 3: $a_2=1$}

Let $p(x)=x^3+x^2+a_1x+a_0x$, and set:
\begin{eqnarray*}
a&=&a_0^2+5-a_0+a1+(3-4a_0)i-2n+4i^2\\
b&=&-2i+n-3\\
c&=&a_0^2+6-3a_0+a_1+(7-4a_0)i-2n+4i^2\\
d&=&i+1\\
e&=&-a_0^2-7+2a_0-a_1-(6-4a_0)i+3n-4i^2\\
f&=&i
\end{eqnarray*}
Substituting into (\ref{cp}) we get
\[g(x)=x^3+(1-3n)x^2+(3n^2-2n+a1)x-n^3+n^2-a1n+a0=p(x-n).\]
We now express $i$ in terms of a new parameter $t$, setting:
\[i=a_0/2-t.\]
This gives us
\begin{eqnarray*}
a&=&5+a_0/2+a_1-3t-2n+4t^2\\
c&=&6+a_0/2+a_1-7t-2n+4t^2\\
e&=&-7-a_0-a_1+6t+3n-4t^2,
\end{eqnarray*}
and so we must satisfy
\begin{eqnarray*}
2n&\leq&5+a_0/2+a_1-3t+4t^2\\
2n&\leq&6+a_0/2+a_1-7t+4t^2\\
3n&\geq&7+a_0+a_1-6t+4t^2.
\end{eqnarray*}
This time we need to choose a large negative value for $t$.  If it is large enough then $d$ and $f$ will be non-negative, and we can easily find a positive $n$ to satisfy the three inequalities, as well as the requirement $n\geq 2i+3$.

Thus we have given a means to construct a biclique with a chromatic root $\alpha +n$ for any cubic integer $\alpha$, thereby proving the cubic case of the $\alpha +n$ conjecture.

Now, the quadratic case of the $\alpha +n$ conjecture was proved using a subfamily of a family of graphs known as \emph{rings of cliques}.  However, it is interesting to note that members of this subfamily have precisely the same construction as $(2,k)$-bicliques, meaning that bicliques have been used to satisfy the $\alpha +n$ conjecture in both cases.  Given the exponential increase in the number of these graphs as $j$ increases (constructed as above, an $(j,k)$-biclique has $2^j-2$ parameters), it seems entirely plausible that they might satisfy the general conjecture.  Unfortunately the increase in parameters leads to difficulties in finding correct specialisations in the manner of the two cases proved so far, and it seems likely that a different method from that used in this paper would need to be found for algebraic numbers of higher degree.

\subsubsection*{Acknowledgements}
This paper was written while under the supervision of Peter Cameron at Queen Mary, University of London.  I would like to thank Prof. Cameron for suggesting this topic, and for his helpful comments and advice.  I am also indebted to a referee for pointing out the connection between the chromatic polynomial of a biclique and the matchings of its complement; his or her insight led to a significant improvement in the exposition of this paper.

\bibliographystyle{spmpsci}
\bibliography{mybib}
\end{document}